\newtheorem{theorem}{Theorem}
\newtheorem{corollary}{Corollary}
\newtheorem{definition}{Definition}
\newtheorem{lemma}{Lemma}
\newcommand{\bb}{\color{blue}}
\newcommand{\dd}{\ensuremath{\displaystyle}}
\newcommand{\liml}{\ensuremath{\lim\limits}}
\newcommand{\infl}{\ensuremath{\inf\limits}}
\newcommand{\supl}{\ensuremath{\sup\limits}}
\newcommand\intl{\ensuremath{\int\limits}}
\newcommand{\suml}{\ensuremath{\sum\limits}}
\newcommand{\PP}{\ensuremath{\mathbf{P}}}
\newcommand{\PPP}{\ensuremath{\mathcal{P}}}
\newcommand{\XX}{\ensuremath{\mathcal{X}}}
\newcommand{\SSS}{\ensuremath{\mathcal{S}}}
\newcommand{\EE}{\mathsf E}
\newcommand{\bD}{\stackrel{\mathcal{D}}{=}}
\newcommand{\1}{\ensuremath{\mathbf{1}}}
\newcommand{\ud}{\,\mathrm{d}}
\newcommand*{\TR}{\hfill\ensuremath{\triangleright}}
\newcommand{\bd}{\stackrel{{\rm def}}{=\!\!\!=}}
\begin{document}
\section*{Lorden's inequality and the polynomial rate of convergence of some extended Erlang-Sevastyanov queuing system\thanks{The work is supported by RFBR, project No~20-01-00575 A.}}
\begin{flushright}
\textbf{Galina Zverkina\\
{\it V. A. Trapeznikov Institute of Control Sciences of Russian Academy of Sciences, 65 Profsoyuznaya street, Moscow 117997, Russia} }
\end{flushright}

\begin{abstract}
It is more important to estimate the rate of convergence to a stationary distribution rather than only to prove the existence one in many applied problems of reliability and queuing theory. This can be done via standard methods, but only under assumptions about an exponential distribution of service time, independent intervals between recovery times, etc. Results for such simplest cases are well-known. 
Rejection of these assumptions results to rather complex stochastic processes that cannot be studied using standard algorithms. 
A more sophisticated approach is needed for such processes. That requires generalizations and proofs of some classical results for a more general case. One of them is the generalized Lorden's inequality proved in this paper.
We propose the generalized version of this inequality for the case of dependent and arbitrarily distributed intervals between recovery times. This generalization allows to find upper bounds for the rate of convergence for a wide class of complicated processes arising in the theory of reliability. 
The rate of convergence for a two-component process has been obtained via the generalized Lorden's inequality  in this paper.

\noindent{\bf keywords} {\it Lorden's inequality, Convergence rate, Strong upper bounds, Coupling method, Successful coupling.}
\end{abstract}
\section{Introduction}
Let's consider an {\it extended} $M|G|\infty$ queueing system, where intensities of input flow and service depend on the {\it full state of the system $X_t$} (the full state of the system $X_t$ is described below) -- see Fig.1.
The study of the behavior of such a system 
with arbitrary dependencies between parameters
is impossible
via a standard technique.
\begin{center}
\begin{picture}(100,210)
\thicklines
\put(0,75){\vector(1,0){30}}
\put(0,80){$\lambda(X_t)$}
\put(40,0){\line(1,0){20}}
\put(40,20){\line(1,0){20}}
\put(40,30){\line(1,0){20}}
\put(40,50){\line(1,0){20}}
\put(40,60){\line(1,0){20}}
\put(40,80){\line(1,0){20}}
\put(40,90){\line(1,0){20}}
\put(40,110){\line(1,0){20}}
\put(40,120){\line(1,0){20}}
\put(40,140){\line(1,0){20}}
\put(40,150){\line(1,0){20}}
\put(40,170){\line(1,0){20}}
\put(40,0){\line(0,1){20}}
\put(60,0){\line(0,1){20}}
\put(40,30){\line(0,1){20}}
\put(60,30){\line(0,1){20}}
\put(40,60){\line(0,1){20}}
\put(60,60){\line(0,1){20}}
\put(40,90){\line(0,1){20}}
\put(60,90){\line(0,1){20}}
\put(40,120){\line(0,1){20}}
\put(60,120){\line(0,1){20}}
\put(40,150){\line(0,1){20}}
\put(60,150){\line(0,1){20}}
\put(50,175){\circle*{3}}
\put(50,180){\circle*{3}}
\put(50,185){\circle*{3}}
\put(50,190){\circle*{3}}
\put(50,195){\circle*{3}}
\put(50,200){\circle*{3}}
\put(50,10){\circle*{15}}
\put(50,40){\circle*{15}}
\put(50,70){\circle*{15}}
\put(50,100){\circle*{15}}
\put(70,10){\vector(1,0){30}}
\put(70,17){$h_1(X_t)$}
\put(70,100){\vector(1,0){30}}
\put(70,107){$h_n(X_t)$}
\put(70,40){\vector(1,0){30}}
\put(70,47){$h_2(X_t)$}
\put(80,57){\circle*{3}}
\put(80,62){\circle*{3}}
\put(80,67){\circle*{3}}
\put(80,72){\circle*{3}}
\put(80,77){\circle*{3}}
\put(80,82){\circle*{3}}
\put(80,87){\circle*{3}}
\put(80,92){\circle*{3}}
\put(80,97){\circle*{3}}
\end{picture}
\\
\vspace{0.5cm}
Fig.1. Studied extended Erlang-Sevastyanov Queuing System
\end{center}

\subsection{Full state of the system $X_t$}

The full system state at the time $t$ is described by the following vector 
$$X_t=\left(n_t,
x^{(0)}_t; x^{(1)}_t,x^{(2)}_t,\ldots,x^{(n_t)}_t\right),$$ where

$x^{(0)}_t$ -- the elapsed time from the last customer input, 

$x^{(i)}_t$ -- the elapsed times of the service of all customers staying in the system,

$n_t$ -- the number of the customers in the system at the time $t$. 

We also use suppose for a 
simplicity that $x^{(i)}_t\ge x^{(i+1)}_t$ for $i=1,\ldots,n_t$,

It is easy to see that $x^{(0)}_t\le x^{(n_t)}_t$, i.e. the customer
numbered by $n_t$ appears in the system no later than at the time
$t-x^{(0)}_t$.

Suppose the intensity of input flow is $\lambda=\lambda(X_t)$, and intensity of service for $i$-th customer is $h_i=h_i(X_t)$.

Let 
$$
X_0=(0;0)
$$
for simplicity.
\subsection{Conditions}\label{cond}

\begin{enumerate}
\item There exists the positive function $\lambda_0(s)$ and the positive constant $\Lambda$ such that for all possible $X_t$,  $\lambda_0\left(x_t^{(0)}\right)\le  \lambda(X_t) \le \Lambda<\infty$.

~

\item There exist two a.s. positive (generalized) functions $\varphi(t)$ and $Q(t)$ ($Q(t)$ is bounded in a neighborhood of zero) such that for all \linebreak $X_t=\left(n_t, x^{(0)}_t; x^{(1)}_t,x^{(2)}_t,\ldots,x^{(n_t)}_t\right)$ the following inequalities hold :

$\varphi\left(x_t^{(i)}\right)\le  h_i(X_t)\le  Q\left(x_t^{(i)}\right)$ ~~ --- ~~ for $i=1,2,\ldots n_t$.

~

\item $\dd \intl_0^\infty x^k\ud \Phi(x)< \infty$ for $k\ge 2$, where $\Phi(x)=1-\dd\intl_0^x\exp\left(\intl_0^s -\varphi(s)\ud s\right)\ud x$.

\end{enumerate}

\subsection {Intensities}
\begin{definition}{About intensities}.

If some object lives a random time $\tau$ with the distribution function $G(s)$, and it is alive at the time $t$, then the probability of its dead in the interval $(t,t+\Delta)$ is equal
$$
\dd \frac{G(t+\Delta)-G(t)}{1-G(t)}= \dd \frac{G'(t+0)}{1-G(t)}\Delta+ o(\Delta); \qquad h(t) \stackrel{{\rm def}}{=\!\!\!=} \dd \frac{G'(t+0)}{1-G(t)};
$$
and $G(t)=\dd 1-\exp\left(\dd-\intl_0^t h(s)\ud s\right)$, if $G(t)$ is absolutely continuous.

If the distribution function $G(s)$ has a jump at the point $a$, then it's intensity at this jump point $a$ is equal $-\delta(x-a)\times \ln (G(a+0)-G(a-0)$, where $\delta(\ast)$ is a classic $\delta$-function.
\TR
\end{definition}

So, 
$$\varphi(s)\bd \dd\frac{G'(s)}{1-G(s)}-\suml_{i}\delta(s-a_i)\ln\big(G(a_i+0)-G(a_i-0) \big),$$ where $\{a_i\}$ --- is the set of all points of discontinuity of a function 
$G(s)$.

And the classic formula $G(t)=\dd 1-\exp\left(\dd-\intl_0^t h(s)\ud s\right)$ is true.

{\it Thus, mixed distribution functions of service time are admitted.}

The generalized intensities define a Markov process $X_t$.

Our aim is to prove the ergodicity of the process $X_t$ and to find upper bounds for convergence rate of it's distribution to the limit one.

\section {Ergodicity of the process $X_t$.}
Note, $X_t$ is regenerative process: regeneration points are points of new customer arrivals into idle system, i.e. $X_t=(1;0;0)$ (before these points of time the system is empty).

Let $R_1,$ $R_2,$ $R_3,$ be \ldots lengths of sequential regeneration periods.
These random variables are i.i.d., they consist of the idle period $\sigma_i$ and busy period $\zeta_i$; $\EE\,\sigma_i^k\le \dd \frac{k!}{\lambda_0^k}$.

Moreover, the condition 3 from {\bf Introduction} holds: $\EE\,(\sigma_i+\zeta_i)<\infty$. 
Thus, the process $X_t$ is regenerative and ergodic.
So, it's distribution $\mathcal P_t$ at the time $t$ converges to the invariant limit distribution  $\mathcal P$:  $\mathcal P_t \Longrightarrow \mathcal P$.

\begin{theorem}[Well-known fact]
It is well-known, that if $\EE\,(\sigma_i+\zeta_i)^k<\infty$ for some $k>1$, then the rate of convergence of the distribution of the regenerative process -- to the stationary distribution -- is less than $\dd\frac{\mathbf{K}}{ t^{k-1}}$ for {\it some} $\mathbf{K}$.
\end{theorem}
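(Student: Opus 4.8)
The plan is to bound the total variation distance $\|\mathcal P_t - \mathcal P\|_{\mathrm{TV}}$ by a coupling argument and then reduce the coupling time to a renewal-theoretic quantity controlled by the moments of a single regeneration period $R_i=\sigma_i+\zeta_i$. First I would put two copies of the process on one probability space: the process $X_t$ started from the given state $(0;0)$, and a copy $\widehat X_t$ started from the stationary law $\mathcal P$. By the coupling inequality
$$
\|\mathcal P_t - \mathcal P\|_{\mathrm{TV}} \le \PP(T > t),
$$
where $T$ is any coupling time after which the two copies can be made to coincide, it suffices to exhibit a coupling whose time $T$ has a tail of order $t^{-(k-1)}$.

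Next I would exploit the regenerative structure. Both copies return to the distinguished state $(1;0;0)$ along i.i.d. cycles of length $R_i$, and the stationary copy $\widehat X_t$ is the version whose first (delayed) cycle follows the equilibrium law of $R$. Because Condition~1 gives $\lambda(X_t)\ge \lambda_0(\cdot)$, the idle period $\sigma_i$ has a nondegenerate absolutely continuous component, so the cycle distribution is spread out (non-lattice). This lets me synchronize the two renewal sequences: at nearly matching regeneration epochs I apply the standard splitting, and the first instant at which both copies regenerate simultaneously furnishes the coupling time $T$, which is finite a.s. by the ergodicity already established.

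The decisive step is the moment estimate, and I expect the hard part to be showing that $T$ inherits exactly one moment fewer than the cycle, i.e. $\EE\,T^{k-1}<\infty$ whenever $\EE\,R_i^{k}<\infty$. This ``loss of one power'' is the renewal phenomenon that the equilibrium (forward-recurrence) law carries $k-1$ finite moments when the cycle carries $k$; quantitatively it is exactly the content of the (generalized) Lorden's inequality, which bounds the moments of the overshoot of the regeneration renewal process by the moments of $R_i$. Granting $\EE\,T^{k-1}<\infty$, Markov's inequality yields
$$
\PP(T > t) \le \frac{\EE\,T^{k-1}}{t^{k-1}} = \frac{\mathbf{K}}{t^{k-1}},
$$
and combining with the coupling inequality completes the proof. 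The two points demanding care are the verification that the splitting produces a genuinely \emph{successful} coupling and the exact bookkeeping of moments in the overshoot bound, where the Lorden-type estimate does the heavy lifting; everything else is routine.
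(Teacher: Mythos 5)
The paper offers no proof of this statement at all: it is labelled a ``well-known fact'' and invoked as a black box (the relevant references being of the type \cite{Smith}, \cite{Af}), precisely because the point of the paper is that the constant $\mathbf{K}$ produced by such classical arguments is not explicit. Your sketch is the standard way to prove it, and it is essentially the same machinery the paper itself deploys later for its quantitative version: a successful coupling of two copies, reduction of the coupling time to regeneration epochs, a Lorden-type bound on the overshoot to control moments, and Markov's inequality at the end. So in outline your argument is correct and consistent with the paper's methodology. Two points deserve more care than your sketch gives them. First, for convergence in total variation you need the cycle length $R_i=\sigma_i+\zeta_i$ to be \emph{spread out}, not merely non-lattice; your parenthetical conflates the two, although the absolutely continuous component of $\sigma_i$ coming from Condition~1 (the arrival intensity is bounded between $\lambda_0(\cdot)>0$ and $\Lambda<\infty$) does in fact give spread-outness, so the gap is only in the phrasing. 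Second, the assertion that the simultaneous-regeneration coupling time $T$ satisfies $\EE\,T^{k-1}<\infty$ whenever $\EE\,R_i^{k}<\infty$ is itself a nontrivial theorem (the ``loss of one moment'' for coupling of spread-out renewal processes); you correctly identify Lorden's inequality as the engine, but a complete proof would have to carry out the bookkeeping --- bound the moments of the overshoot uniformly in $t$ via Lorden, then sum over the geometric number of splitting attempts exactly as the paper does in its estimate of $\EE\,[\tau(X_0')]^k$ as a geometric sum of regeneration periods. As a proof of the qualitative statement (existence of \emph{some} $\mathbf{K}$) your outline is sound; as written it would not yet yield the explicit $\mathbf{K}$ that the paper is after.
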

If we don't know the constant $\mathbf{K}$, we can not use this fact in practice.
\begin{center}
{\bf Our goal is to find upper bounds for this constant $\mathbf{K}$.}
\end{center}

\subsection {Theorem and corollary}
\begin{theorem}[See \cite{kalzv}]\label{thm1} 
If the conditions 1--3 are satisfied and $\EE\,\eta^k<\infty$, then the following inequality for the process $B_t$ holds: 
\begin{equation}\label{osn}
\EE\,(b_t)^{k-1}\le \EE\, \eta ^{k-1} + \frac{\EE\,\eta^k}{k\EE\,\zeta},
\end{equation}
where
$ \EE\, \eta^{k}=\dd \intl_0^\infty x^{k} \ud \Phi(x);\qquad
\EE\, \zeta= \intl_0^\infty x \ud G(x); \mbox{~~and~~}
G(x)= 1- \intl_0^x \exp ^{-\intl_0^s Q(t)\ud t} \ud s.\qquad
$
\end{theorem}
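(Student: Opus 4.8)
The plan is to read $b_t$ as the forward recurrence (residual) time of the point process of lifetimes whose generalized intensity $h$ is, by Condition~2, squeezed between $\varphi$ and $Q$. Because a larger intensity stochastically shortens a lifetime, the lifetime generated by $\varphi$ (distribution $\Phi$, call it $\eta$) stochastically dominates the actual lifetime $\xi\sim F$, while the lifetime generated by $Q$ (distribution $G$, call it $\zeta$) is stochastically dominated by it: $\zeta\preceq\xi\preceq\eta$. I would record this two–sided domination first, since the right–hand side of \eqref{osn} is exactly the worst case obtained by pushing each moment that occurs to its extreme admissible value.

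Next, in the spirit of the coupling method named in the abstract, I would compare the actual process, started from the empty state $X_0=(0;0)$, with its stationary (equilibrium) version, whose residual time has the density $\big(1-F(x)\big)/\EE\,\xi$. For that version a direct computation (integration by parts / Fubini) gives, for every $t$,
$$
\EE\,\big(b^{\mathrm{stat}}_t\big)^{k-1}=\dd\frac1{\EE\,\xi}\intl_0^\infty x^{k-1}\big(1-F(x)\big)\ud x=\dd\frac{\EE\,\xi^{k}}{k\,\EE\,\xi},
$$
which is precisely the second term of \eqref{osn} before domination is applied.

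Then I would build a successful coupling of the actual residual process with the stationary one and split
$$
\EE\,(b_t)^{k-1}\le \EE\,\big(b^{\mathrm{stat}}_t\big)^{k-1}+\EE\Big[(b_t)^{k-1};\ \text{not yet coupled}\Big].
$$
The aim is to show that the transient (pre–coupling) term never exceeds the single first–interval contribution $\EE\,\xi^{k-1}$. Granting this, one gets $\EE\,(b_t)^{k-1}\le \EE\,\xi^{k-1}+\EE\,\xi^{k}/(k\,\EE\,\xi)$, and the two–sided domination closes the argument: in the additive term and in the numerator the actual lifetime is replaced by the majorant $\eta$ (monotonicity of $x\mapsto x^{k-1}$ and $x\mapsto x^{k}$), and in the denominator $\EE\,\xi$ is replaced by the smaller $\EE\,\zeta$; both substitutions only enlarge the bound and produce exactly $\EE\,\eta^{k-1}+\EE\,\eta^k/(k\,\EE\,\zeta)$. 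Finiteness of every quantity used follows from Condition~3 together with the hypothesis $\EE\,\eta^k<\infty$.

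The hard part is exactly this transient estimate, and two features of the model make it delicate. First, one must avoid the inspection paradox: the lifetime straddling a fixed time $t$ is length–biased, so the pre–coupling residual cannot be bounded naively by a generic lifetime moment; the estimate has to exploit either that coupling is attempted at genuine regeneration epochs, or, equivalently, the monotone decrease of the free term $\rho(s)=\intl_s^\infty(x-s)^{k-1}\ud F(x)$ together with a uniform control of the deviation of the renewal measure $U_0=\suml_{n\ge0}F^{\ast n}$ from its stationary density $1/\EE\,\xi$ (note that the crude estimate $U_0([0,t])\le 1+t/\EE\,\xi$ is false, so the monotonicity of $\rho$ must be used essentially). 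Second, Condition~2 permits $h$ to carry $\delta$–atoms, so $F$, $\Phi$ and $G$ may be genuinely mixed distributions; I would therefore construct the coupling through the common continuous component supplied by the strictly positive densities coming from $\varphi$ and $Q$, and verify that the domination $\zeta\preceq\xi\preceq\eta$, the stationary–residual identity, and the transient estimate all survive in the presence of atoms.
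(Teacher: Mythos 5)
The paper itself contains no proof of Theorem~\ref{thm1}: the statement is imported wholesale from \cite{kalzv}, so there is no internal argument to compare yours against. Judged on its own merits, your outline assembles correct ingredients --- the two-sided domination $\zeta\prec\xi\prec\eta$ does follow from the intensity bounds of Condition~2, and the identity $\EE\,\big(b^{\mathrm{stat}}_t\big)^{k-1}=\frac{1}{\EE\,\xi}\intl_0^\infty x^{k-1}(1-F(x))\ud x=\frac{\EE\,\xi^k}{k\,\EE\,\xi}$ for the stationary residual is right and does explain where the second term of (\ref{osn}) comes from --- but it is not a proof. The entire analytic content of Lorden's inequality sits in the step you label ``the hard part'' and then explicitly grant: that the pre-coupling term $\EE\,\big[(b_t)^{k-1};\ \mbox{not yet coupled}\big]$ is bounded by $\EE\,\xi^{k-1}$ \emph{uniformly in $t$}. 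You correctly diagnose why the naive bound fails (the interval straddling $t$ is length-biased, and the crude renewal-measure estimate is false), but you do not supply the argument that overcomes this; as written, the theorem has been reduced to an unproved claim of essentially the same difficulty as the theorem itself.

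There is also a structural obstacle to the route you chose. Theorem~\ref{thm1} is invoked under Conditions 1--3 only, i.e.\ for inter-renewal intervals that need not be independent or identically distributed --- this is precisely the setting of Theorem~\ref{thm2}, which asserts the same bound, and it is the reason a \emph{generalized} Lorden inequality is needed at all. In that generality there is no single distribution $F$, no well-defined scalar $\EE\,\xi$, and no stationary renewal process with residual density $(1-F(x))/\EE\,\xi$ to couple with; the paper itself later emphasizes that the relevant stationary distributions are unknown. So the comparison object at the heart of your plan is unavailable, and the replacement of $F$ by $\Phi$ and $\Psi$ cannot be postponed to a cosmetic last step: the domination supplied by $\varphi$ and $Q$ has to be injected pathwise, at the level of the conditional intensities of each interval given the past, before any moment computation is made. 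Without (a) that pathwise reduction and (b) a genuine proof of the transient estimate, the proposal does not establish (\ref{osn}).
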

\begin{corollary}
If conditions 1--3 are satisfied, then
$$
\EE\,(b_t)\le \EE\, \eta + \frac{\EE\,\eta^2}{2\EE\,\zeta}.
$$
\TR
\end{corollary}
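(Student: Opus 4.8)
The plan is to obtain the corollary as the special case $k=2$ of Theorem \ref{thm1}, so that essentially no new work is required beyond matching the hypotheses. The one subtlety is that the theorem carries the extra assumption $\EE\,\eta^k<\infty$, whereas the corollary is stated under conditions 1--3 alone; my first step is therefore to confirm that this moment condition is automatic when $k=2$.

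To that end, recall that the theorem defines $\EE\,\eta^k=\intl_0^\infty x^k\ud\Phi(x)$. Condition 3 asserts precisely that $\intl_0^\infty x^k\ud\Phi(x)<\infty$ for every $k\ge 2$, and in particular for $k=2$; hence $\EE\,\eta^2<\infty$ and the hypothesis of Theorem \ref{thm1} is met. I would also check that $\EE\,\zeta=\intl_0^\infty x\ud G(x)$ is finite and strictly positive, so that the denominator $2\EE\,\zeta$ in the bound is well defined; this is immediate, since $Q\ge\varphi$ makes $\zeta$ stochastically no larger than $\eta$, giving $0<\EE\,\zeta\le\EE\,\eta<\infty$.

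It then remains only to substitute $k=2$ into inequality (\ref{osn}). The left-hand side $\EE\,(b_t)^{k-1}$ collapses to $\EE\,(b_t)$, the first term $\EE\,\eta^{k-1}$ to $\EE\,\eta$, and the correction term $\dd\frac{\EE\,\eta^k}{k\EE\,\zeta}$ to $\dd\frac{\EE\,\eta^2}{2\EE\,\zeta}$, which is exactly the asserted inequality. Because the argument is a pure specialization of the theorem, there is no genuine obstacle; the only point deserving a word of justification is the reduction of the moment hypothesis to condition 3, carried out above.
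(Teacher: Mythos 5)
Your proof is correct and matches the paper's (implicit) argument: the corollary is exactly the case $k=2$ of Theorem \ref{thm1}, with the moment hypothesis $\EE\,\eta^2<\infty$ supplied by condition 3. The paper offers no separate proof precisely because the specialization is immediate, and your extra check that $\EE\,\zeta$ is finite and positive is a harmless (and reasonable) addition.
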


\section{Comparison of the process $X_t$ with the ``standard'' process for $M|G|\infty$}

Let's consider the ``standard'' queueing system $M|G|\infty$ that has 
the constant intensity $\Lambda$ of input flow.  Service times are
independent identically distributed random variables with a distribution function $\Phi_0(s)=1-\dd \intl_0^x \exp ^{-\intl_0^s \varphi(t)\ud t} \ud s$. 

The time intervals between arrivals of customers and service times are mutually independent.

Following the scheme described previously we can construct the Markov process $Y_t=\left(m_t; y^{(0)}_t; y^{(1)}_t,y^{(2)}_t,\ldots,y^{(n_t)}_t\right)$ for this system, where $m_t$ is the number of customers in the system at time $t$, $y^{(0)}_t$ is the elapsed time from the
last customer arrival, and $y^{(i)}_t$ is the elapsed time of service for the $i$-th customer, currently being in the system; $Y_0=(0;0)$.

\begin{lemma}  The processes $X_t$ and $Y_t$ on the same probability space
are defined by the following way:
\begin{enumerate}

\item $X_0=Y_0=(0;0)$;

~

\item for all $t\ge 0$, $n_t\le m_t$;

~

\item for all $t\ge 0$ there are the numbers $k_1<k_2<\cdots<k_{n_t}\le m_t$ such that $x^{(i)}_t\le y^{(k_i)}_t$.

\end{enumerate}
\end{lemma}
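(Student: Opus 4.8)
The plan is to build both processes simultaneously on one probability space by a monotone (order-preserving) coupling driven by common randomness, and then to verify that properties (2)--(3) hold as an invariant that is preserved through every jump epoch. The guiding idea is that $Y_t$ should dominate $X_t$: by Condition 1 the arrival rate of $X_t$ never exceeds the constant rate $\Lambda$ of $Y_t$, so $Y_t$ receives at least as many customers; and since the service intensity of $Y_t$ equals $\varphi$ while Condition 2 gives $h_i(X_t)\ge \varphi\left(x_t^{(i)}\right)$, each customer of $X_t$ is served at least as fast as its counterpart in $Y_t$, so it departs no later. I would arrange the construction so that every customer of $X_t$ is paired with a customer of $Y_t$ that is born at the same instant and dies no earlier.

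First, the arrival coupling. Let the arrivals of $Y_t$ be a Poisson process of rate $\Lambda$, and independently mark each of its points, retaining it as an arrival of $X_t$ with probability $\lambda(X_{t-})/\Lambda\le 1$ (a predictable thinning). Then the arrival epochs of $X_t$ form a subset of those of $Y_t$, and at each retained epoch I match the newly born $X$-customer to the simultaneously born $Y$-customer; both start with elapsed time $0$. This makes the set of $X$-customers inject into the set of $Y$-customers and keeps $n_t\le m_t$, establishing (2) at arrival instants, while unmatched $Y$-arrivals only increase $m_t$.

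Second, the service coupling. For a matched pair I drive the two completions by the same randomness and add extra killing to the faster ($X$) side. Concretely, on the common probability space I realise the completion of the $Y$-customer from the intensity $\varphi$ and the completion of the $X$-customer from $h_i(X_t)=\varphi\left(x_t^{(i)}\right)+\big(h_i(X_t)-\varphi\left(x_t^{(i)}\right)\big)$, where the first (shared) part is coupled identically and the nonnegative excess acts as additional killing of the $X$-customer only. Since both members of a pair age at unit rate, while both are alive their elapsed times coincide, so the shared hazard is evaluated at the same argument; hence the $X$-member can only die sooner, i.e. its death time is a.s. no later than that of its $Y$-partner. Consequently, whenever an $X$-customer is alive its $Y$-partner is alive as well, and the two have equal elapsed service time. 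Sorting both systems by decreasing age and reading off the ranks $k_1<k_2<\cdots<k_{n_t}$ of the $Y$-partners of the $n_t$ alive $X$-customers (these ranks are strictly increasing because the partners' ages are ordered exactly as the $x_t^{(i)}$ are) yields $x_t^{(i)}=y_t^{(k_i)}$, giving (3) with the required inequality and $k_{n_t}\le m_t$. Proceeding by induction over the locally finite sequence of arrival and completion epochs shows that (2)--(3) are preserved by each event type.

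The main obstacle I anticipate is making the service coupling rigorous for the \emph{generalized}, state-dependent intensities admitted here: $h_i$ depends on the whole current state $X_t$ (so it is only adapted, not a fixed hazard function of elapsed time), and by the definition of intensities both $\varphi$ and $h_i$ may carry atoms (the $\delta$-terms coming from jumps of the distribution function). The excess-killing device handles the state dependence, but at an atom point one must couple the two Bernoulli completion events monotonically, so that the event ``the $Y$-partner completes at the atom'' is contained in ``the $X$-customer completes at the atom''; ensuring this containment from $h_i\ge\varphi$ at atoms, and checking that the resulting pair of processes is genuinely Markov with the prescribed generators, is the delicate technical point. Everything else reduces to the order-preservation bookkeeping across jump epochs described above.
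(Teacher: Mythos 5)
Your construction is correct and is essentially the argument the paper delegates to Stoyan's monograph \cite{sht}: a monotone coupling obtained by thinning the rate-$\Lambda$ Poisson arrival stream of $Y_t$ (legitimate by Condition 1, since $\lambda(X_{t-})\le\Lambda$) and realising each matched pair's completions from a shared hazard $\varphi$ plus the nonnegative excess $h_i-\varphi$ acting as extra killing on the $X$-side (Condition 2), so that every alive $X$-customer has an alive, equally aged $Y$-partner and properties (2)--(3) propagate through the jump epochs. The paper gives no details beyond the citation, so your write-up --- including the flagged care needed at atoms of the generalized intensities --- supplies exactly what that reference is standing in for.
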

Proof is based on the facts from \cite{sht}.
\begin{definition}
We say that $\xi\prec \eta$ if $\PP\{\xi<s\}\ge \PP\{\eta<s\}$ for all $s\in\mathbf      R$.
\end{definition}

So, the busy period $\zeta^X$ of $X_t$ is less then busy period
$\zeta^Y$ of $Y_t$ in the sense of this definition: $\zeta^X\prec
\zeta^Y$; moreover, $\PP\{n_t=0\}\ge \PP\{m_t=0\}$, and
$\EE\,\left[ \zeta^X\right]^k\le \EE\,\left[ \zeta^Y\right]^k$
for all $k>0$.

\subsection  {Well-known facts about ``standard'' system $M|G|\infty$ -- see \cite{Takac,Stad,Fere}}
In a ``standard'' system $M|G|\infty$, the intensity of input flow is constant $\Lambda$, the service times $\xi_i$ are i.i.d.r.v. with distribution function $$\PP\{\xi_i<s\}=\Phi_0(s)= 1- \intl_0^x \exp ^{-\intl_0^s \varphi(t)\ud t} \ud s.$$

For this system (if $X_0=(0;0)$):
\begin{itemize}
\item $\dd \PP\{m_t=k\}=\frac{e^{-\mathcal G(t)}\big(\mathcal G(t)\big)^k}{k!},$ where $\mathcal G(t)\bd\dd \Lambda\intl_0^t(1-\Phi_0(s))\ud s$.

So, for all $t\ge 0$,
$\dd
\PP\{m_t=0\}\ge \liml_{t\to \infty}\PP\{m_t=0\}=e^{-\rho},
$
where 
\begin{equation}\label{rho}
\rho\bd \dd \Lambda m_1=\Lambda\dd \intl_0^ \infty
(1-\Phi_0(s))\ud s.
\end{equation}

\item For the busy period of the considered system $M|G|\infty $
(denote the busy periods by $\zeta_i$) we know the distribution
function
\begin{equation}\label{tak2}
B(x)\bd \PP\{\zeta_i\le x\}=1-
\frac{1}{\Lambda}\suml_{k=1}^\infty c^{n\ast}(x),
\end{equation}
where $c(x)\bd \dd \Lambda(1-\Phi_0(x))e^{-\mathcal G (x)}$, and $c^{n\ast}$ is a
$n$-th convolution of the function $c(x)$.

\item
Moreover, we know the Laplace transform of $B(x)$:
\begin{equation}\label{lap}
\mathcal{L}[B](s)=1+\frac{s}{\Lambda}-\frac{1}{\Lambda \dd\intl_0^\infty e^{-st-\Lambda\intl_0^t [1-\Phi_0(v)]\ud v}\ud t}.
\end{equation}

\item Also we have formulae:
\begin{equation}\label{razl}
\EE\,\zeta_i^n=(-1)^{n+1}\left\{\frac{e^\rho}{\Lambda}nC^{(n-1)} - e^\rho \suml_{k=1}^{n-1}C_n^k \EE\,\zeta_i^{n-k} C^{(p)} \right\},
\qquad
n\in\mathbf     {N}, 
\end{equation}
where
$$
C^{(n)}=\intl_0^\infty (-t)^n \left(e^{-\Lambda
\intl_0^\infty [1- \Phi_0(v)]\ud v}\right)\Lambda
[1- \Phi_0(t)]\ud t.
$$
\end{itemize}

From formulae (\ref{tak2})--(\ref{razl}), we have 
$$\EE \,
\zeta_i=\dd\frac{e^\rho-1}{\Lambda}$$ 
and 
$$\EE
\,\zeta_i^2=\dd\frac{2e^{2\rho}}{\Lambda}\dd\intl_0^\infty \left(e^{-\Lambda
\intl_0^\infty [1-\Phi_0(v)]\ud v}\right)\ud t.
$$

But for the next moments of $\zeta_i$ the calculations are very
complicated, and we want to {\it estimate} this moments.

\subsection{Estimation}
Return to the formula
$\dd
B(x)\bd \PP\{\zeta_i\le x\}=1-
\frac{1}{\Lambda}\suml_{k=1}^\infty c^{n\ast}(x),
$
where $c(x)\bd \dd \Lambda(1- \Phi_0(x))e^{-\mathcal G (x)}$, and $c^{n\ast}$ is a
$n$-th convolution of the function $c(x)$.

~

It is easy, $\dd\intl_0^\infty c(s)\ud s=1-e^{-\Lambda m_1}=1-e^{-\rho}\bd \varrho\in(0;1)$.

So, $r(s)\bd \dd \frac{c(s)}{ \varrho}$ is a density of distribution of some r.v. $\varsigma$.
And $c^{n\ast}(x)=\varrho^n r_n(x)$ where $ r_n(x)$ is a density of distribution of the sum $\dd\suml_{i=1}^n \varsigma_i$ where $\varsigma_i$ are i.i.d.r.v. with distribution density $r(s)$.

Further, 
$$\dd\EE\,\varsigma^k=\intl_0^\infty s^k
\varrho^{-1}\Lambda(1- \Phi_0(s))e^{-\mathcal G (s)}\ud s\le 
\frac{\Lambda}{ \varrho}\intl_0^\infty s^k (1- \Phi_0(s))\ud
s=\frac{\Lambda\EE\,\xi_i^{k+1}}{(k+1) \varrho}.
$$

Now, using the Jensen's inequality in the form
$$\dd (a_1+\ldots+a_n)^k\le n^{k-1}(a_1^k+\ldots +a_n^k),
$$ 
we have:
\begin{eqnarray*}
\EE\,\zeta_i^k=\intl_0 ^ \infty k x^{k-1}(1-B(x))\ud x =\frac{1}{\Lambda}\suml_{n=1}^\infty\intl_0^ \infty k x^{k-1}
c^{n\ast}(x)\ud x =\qquad \qquad \qquad \quad
\\ \\
= \frac{1}{\Lambda}\suml_{n=1}^\infty\intl_0^
\infty k x^{k-1} \varrho^n r^{n\ast}(x)\ud x=
\frac{1}{\Lambda}\suml_{n=1}^\infty \varrho^n k\,
\EE\left(\suml_{j=1}^n \varsigma_j\right)^ {k-1}\le  \qquad \qquad
\\ \\
\le \frac{1}{ \Lambda} \suml_{n=1}^\infty \varrho^nk
n^{k-1}\frac{\Lambda\EE\, \xi_i^k}{\varrho k}= \frac{\EE\,\xi_i
^k}{\varrho } \suml_{n=1}^\infty n^{k-1} \varrho^n= \frac{\EE\,\xi_i
^k}{\varrho } \times \varphi(\varrho,k-1),
\end{eqnarray*}
where $\dd \varphi(x,k)\bd \left(x\,\frac{\ud}{\ud
x}\right)^k\frac{1}{1-x}$.
We will use the {\it coupling method} to obtain upper bounds for a convergence rate.

\subsection{Coupling method}

{\it If two homogeneous Markov processes  $X_t$ and $X_t'$ with the same transition function but with different initial states coincide at the time $\tau$, then  their distributions are equal after the time $\tau$.}
So, 
\begin{eqnarray*}|\PP\{X_t\in S\}-\PP\{X_t'\in S\}|=\hspace{7cm}
\\ \\
=|\PP\{X_t\in S\}-\PP\{X_t'\in S\}|\times (\1(\tau> t)+ \1(\tau\le t))\le \PP\{\tau> t\}= \qquad
\\ \\
=\PP\{\tau^k> t^k\}\le \dd\frac{\EE\,\tau^k}{t^k}.
\end{eqnarray*}
The next modification of the coupling method is used for the processes in continuous time.

\subsubsection{Successful coupling (see \cite{Griff}}
So, we will create \emph{on some probability space} the paired process $\mathcal{Z}_t=(Z_t,Z_t')$ such that
\begin{itemize}
\item[(i)] For all $s\ge  0$ and $S\in \mathcal B ( \mathcal X)$, ~
$
\mathbf{P} \{Z_t\in S\}= \mathbf{P} \{X_t\in S\}$, $ \mathbf{P} \{Z'_t\in S\}= \mathbf{P} \{X'_t\in S\}.
$
(Therefore, $Z_0=X_0$ and $Z'_0=X'_0$.)

~

\item[(ii)] For all $t>\tau(X_0,X'_0)=\tau(Z_0,Z'_0) \bd \inf\{t\ge  0:\,Z_t=Z_t'\}$ the equality $Z_t=Z_t'$ is true.

~

\item[(iii)] For all $X_0,X_0'\in \XX$, $ \mathbf{P} \{\tau(X_0,X'_0)<\infty\}=1$.
\end{itemize}
If the conditions {\bb(i)}--{\bb(iii)} are satisfied, then the paired process $\mathcal{Z}_t$ is called \emph{successful coupling}.
\begin{eqnarray*}
\mbox{Hence, }\;|\PP\{X_t\in S\}-\PP\{X_t'\in S\}|=|\PP\{Z_t\in S\}- \PP\{Z_t'\in S\}|\le \qquad \qquad \qquad 
\\ \\
\le \PP\{\tau>t\}\le \PP\{\phi(\tau)>\phi(t)\}\le \frac{\EE\,\phi(\tau)}{\phi(\tau)}=\mathcal{R}(t,X_0,X_0'),\qquad \phi\uparrow,\quad \phi>0.
\end{eqnarray*}
And $$\|\mathcal{P}_t-\mathcal{P}_t'\|_{TV}\bd2\supl_{S\in \mathcal{B}(\mathcal{X})}|\PP\{X_t\in S\}-\PP\{X_t'\in S\}|\le 2\mathcal{R}(t,X_0,X_0').$$

{Return to our studied ({see \bf Introduction}) process $X_t$ and its variant $X_t'$}.
Let $X_0=(0;0)$, and $X_0'$ is arbitrary from $\XX$; 
$$X_t=\left(n_t,
x^{(0)}_t; x^{(1)}_t,x^{(2)}_t,\ldots,x^{(n_t)}_t\right);$$ 
$$X_t'=\left(n_t',
{x^{(0)}_t}';{ x^{(1)}_t}',{x^{(2)}_t}',\ldots,{x^{(n_t')}_t}'\right).$$

Consider the times $\theta_1$, $\theta_2$, $\theta_3$,\ldots when $X_t'$ comes to the set $\SSS_1$, or $n_t'$ changes the value from 1 to 0.
$$
\PP\{X_{\theta_i}\in \SSS_1\}=\PP\{n_{\theta_i}=0\}\ge \PP\{m_{\theta_i}=0\}\ge  e^{-\rho}\bd \pi_0.
$$
If $n_{\theta_i+}=n'_{\theta_i+}$ then both systems are idle with probability greater than $\pi_0$. 
Now, we apply the Coupling Lemma and Generalized Lorden's inequality.

\subsection{Coupling Lemma}
\begin{lemma}[Coupling Lemma -- see, e.g., \cite{kato,verbut}]
Let $f_i(s)$ be the distribution density of r.v. $\theta_i$ ($i=1,2$).
And let $\dd\intl_{-\infty}^ \infty \min(f_1(s),f_2(s))\ud s=\tilde \kappa >0.$
Then on some probability space there exists two random variables $\vartheta_i$ such that $\vartheta_i\bD \theta_i$, and $\PP\{\vartheta_1=\vartheta_2\}\ge  \tilde \kappa $.
\end{lemma}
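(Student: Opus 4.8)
The plan is to use the classical overlap-splitting ("$\gamma$-coupling") construction, which realizes the two laws on a common space so that they agree on an event of probability $\tilde\kappa$. First I would set $m(s)\bd\min(f_1(s),f_2(s))$, so that by hypothesis $\intl_{-\infty}^\infty m(s)\ud s=\tilde\kappa\in(0,1]$. If $\tilde\kappa=1$ then $f_1=f_2$ almost everywhere and one may simply take $\vartheta_1=\vartheta_2$ with common density $f_1$; so assume $\tilde\kappa<1$. The normalized overlap $g(s)\bd m(s)/\tilde\kappa$ is then a genuine probability density, and the two residual densities
$$r_i(s)\bd\frac{f_i(s)-m(s)}{1-\tilde\kappa},\qquad i=1,2,$$
are nonnegative (because $f_i(s)\ge m(s)$ pointwise) and integrate to $1$, hence are also probability densities.

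Next I would build the paired variables on a product space carrying four independent ingredients: a Bernoulli variable $B$ with $\PP\{B=1\}=\tilde\kappa$, a variable $W$ with density $g$, and variables $V_1,V_2$ with densities $r_1,r_2$. Then set
$$\vartheta_i\bd\begin{cases} W, & B=1,\\ V_i, & B=0,\end{cases}\qquad i=1,2.$$
The key step is to check the marginals. For any Borel set $A$,
$$\PP\{\vartheta_i\in A\}=\tilde\kappa\intl_A g(s)\ud s+(1-\tilde\kappa)\intl_A r_i(s)\ud s=\intl_A m(s)\ud s+\intl_A\big(f_i(s)-m(s)\big)\ud s=\intl_A f_i(s)\ud s,$$
so that $\vartheta_i\bD\theta_i$, as required.

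Finally, on the event $\{B=1\}$ the two constructed variables coincide by definition, whence $\PP\{\vartheta_1=\vartheta_2\}\ge\PP\{B=1\}=\tilde\kappa$, which is the claimed bound. I do not anticipate a genuine obstacle: the argument is essentially bookkeeping, and the only points demanding care are the degenerate case $\tilde\kappa=1$ (disposed of separately above) and the nonnegativity of the residual densities, which is immediate from $f_i\ge\min(f_1,f_2)$. The construction is precisely the standard maximal-coupling splitting, and the whole proof reduces to verifying the marginal identities displayed above.
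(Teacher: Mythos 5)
Your construction is correct and is exactly the standard overlap-splitting argument that the paper itself invokes: the paper gives no details, stating only that the proof ``is constructive'' and deferring to the cited references, and what you have written out is precisely that constructive splitting. The marginal check and the bound $\PP\{\vartheta_1=\vartheta_2\}\ge\tilde\kappa$ are both handled properly, including the degenerate case $\tilde\kappa=1$.
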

\begin{proof} It is constructive -- see, e.g., \cite{kato} and \cite{Chang}.
\end{proof}

\subsection {Generalized Lorden's inequality (\cite{Chang,Lorden}). }
Let's consider a countable process with jumps $N_t\bd\dd\suml_{i=1}^\infty \1\left\{ \suml_{k=1}^i \xi_k\le t\right\} $, where
$\left\{\xi_1, \xi_2, ...\right\} $ are independent identically distributed (i.i.d.) positive random variables.
Consider the {\it backward renewal time} (or overshoot) for this process:
$
\dd b_t=t-\sum_{k=1}^{N_t} \xi _k.
$

Let us consider this counting process, where $\xi_j$ -- r.v., that may be dependent.

Let $\PP\{\xi_j\le s\}=F_j(s)$; $F_j$ and $F_i$ may not be equal, but the conditions 1--3 are satisfied; $\Phi(x)\ge F_j(x)\ge \Psi(x)$ -- see above.
\begin{theorem}\label{thm2} 
If the conditions 1--3 for intensities of $\xi_i$ are satisfied and $\EE\,\eta^k<\infty$, then the following inequality for the process $B_t$ holds: 
\begin{equation}\label{osn1}
{{\EE\,(b_t)^{k-1}\le  \EE\,  \eta ^{k-1} +  \frac{\EE\,\eta^k}{k\EE\,\zeta},}}
\end{equation}
where
$ \EE\,  \eta^{k}=  \intl_0^\infty x^{k} \ud \Phi(x);\qquad
 \EE\,  \zeta= \intl_0^\infty x \ud \Psi(x); \mbox{~~and~~}
\Psi(x)= 1- \intl_0^x \exp ^{-\intl_0^s Q(t)\ud t} \ud s.\qquad
  $
\end{theorem}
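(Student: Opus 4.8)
The plan is to reduce the dependent, non-identically-distributed case to the already-established i.i.d. bound of Theorem \ref{thm1} by a stochastic-domination argument. The key observation is that the hypothesis $\Phi(x)\ge F_j(x)\ge \Psi(x)$ for every $j$ says that each interval $\xi_j$ is squeezed, in the sense of the order $\prec$ from the earlier definition, between two i.i.d. renewal sequences: one governed by the distribution $\Phi$ (the ``slow'' envelope, built from the lower intensity $\varphi$) and one governed by $\Psi$ (the ``fast'' envelope, built from the upper intensity $Q$). First I would construct, on a common probability space, three renewal processes driven respectively by $\{\xi_j\}$, by i.i.d.\ copies $\{\eta_j\}\sim\Phi$, and by i.i.d.\ copies $\{\zeta_j\}\sim\Psi$, coupled so that the partial sums are monotonically ordered pathwise. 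Since the intensities satisfy $\varphi\le h_j\le Q$ pointwise (Condition 2), the conditional hazard of each $\xi_j$ dominates that of the $\Phi$-interval and is dominated by that of the $\Psi$-interval, which is exactly what is needed to realize the pathwise ordering even under dependence, because intensities control the increments step by step regardless of the past.

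The second step is to transfer this ordering to the backward renewal time $b_t$. Intuitively, $b_t$ is the age of the current interval, so a sequence of stochastically \emph{shorter} intervals (the $\Psi$-process) produces a \emph{smaller} overshoot, while the $\Phi$-intervals control the size of the current interval in which $t$ falls. Concretely I would bound $\EE\,(b_t)^{k-1}$ above by the corresponding quantity for a renewal process whose inter-arrival laws are exactly those appearing in the i.i.d. Theorem \ref{thm1}: the numerator moment $\EE\,\eta^k=\int_0^\infty x^k\ud\Phi(x)$ comes from the heavy (upper) envelope that governs the length of the interval straddling $t$, and the mean $\EE\,\zeta=\int_0^\infty x\ud\Psi(x)$ in the denominator comes from the light (lower) envelope that governs the renewal rate. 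Once the pathwise domination $b_t\le \widehat b_t$ (with $\widehat b_t$ the overshoot of the dominating i.i.d. renewal process) is in place, taking $(k-1)$-th moments and invoking Theorem \ref{thm1} for that dominating process yields precisely the right-hand side of (\ref{osn1}).

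The main obstacle I expect is making the pathwise domination rigorous when the $\xi_j$ are \emph{dependent}: the classical renewal-theoretic identity $\EE\,(b_t)^{k-1}=\EE\,\eta^{k-1}+\EE\,\eta^k/(k\EE\,\zeta)$ rests on the i.i.d. renewal equation, which is unavailable here. I would circumvent this by never using the renewal equation for the $\xi_j$ directly; instead the dependence is absorbed into the coupling construction, where only the one-step conditional intensity bounds $\varphi\le h_j\le Q$ are used, and these hold by Condition 2 irrespective of the conditioning on the past. Thus the delicate point is verifying that the coupling can be built so that, conditionally on the history, each $\xi_j$ can be generated together with its envelopes preserving the order $\Psi \preceq F_j \preceq \Phi$ at every renewal epoch; this is a standard monotone-coupling construction via the generalized inverse (inverse-transform) method applied to the conditional survival functions, and I would cite the constructive coupling of the Coupling Lemma above and the facts from \cite{sht} to justify it. After that, the moment computation is routine and the factor $\varphi(\varrho,k-1)$ together with the convolution estimates from the Estimation subsection supply the finiteness needed so that $\EE\,\eta^k<\infty$ indeed controls the bound.
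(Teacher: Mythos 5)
The paper itself contains no proof of Theorem~\ref{thm2}: the statement is imported from \cite{kalzv} and \cite{Chang,Lorden}, so your proposal can only be judged on its own merits. Judged that way, it has a genuine gap at its central step. You propose to couple the three renewal sequences so that the partial sums are pathwise ordered, and then to conclude a pathwise domination $b_t\le \widehat b_t$ of the backward renewal times. But the backward renewal time is \emph{not} a monotone functional of the interval sequence, even pathwise under a perfect ordering of the partial sums. Take $\xi_1=\xi_2=1$ versus a dominating sequence with $\xi_1'=2.9$: at $t=2.95$ the smaller-interval process has age $0.95$ while the dominating process has age $0.05$. This non-monotonicity (the inspection paradox) is precisely why Lorden's inequality is nontrivial and why its right-hand side involves the higher-order moment $\EE\,\eta^{k}$ rather than just $\EE\,\eta^{k-1}$; an argument that reduces the overshoot bound to stochastic domination of interval lengths cannot work as stated.

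A second, smaller misreading: Theorem~\ref{thm1} is not the bound for a single i.i.d.\ renewal process that you could dominate by; its right-hand side already mixes the two envelopes ($\Phi$, built from the lower intensity $\varphi$, in the numerator, and $G$ resp.\ $\Psi$, built from the upper intensity $Q$, in the denominator), i.e.\ it is already the statement for intervals whose conditional hazard is squeezed between $\varphi$ and $Q$. The proof in \cite{kalzv} accordingly does not pass through an auxiliary dominating renewal process: it works directly with the process $b_t$, which grows at unit rate and is reset to $0$ with the history-dependent intensity $h$, and applies a Dynkin-formula/drift computation to $b_t^{k}$, using $\varphi\le h\le Q$ conditionally on the past to bound the moments of a single interval above by $\int_0^\infty x^{k}\ud\Phi(x)$ and the renewal rate below by $1/\EE\,\zeta$. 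Your instinct that only the one-step conditional intensity bounds are needed, so that dependence among the $\xi_j$ is harmless, is correct and is the right starting point; but that observation must feed a generator or occupation-measure argument, not a pathwise comparison of overshoots.
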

\begin{corollary}
In conditions 1--3 are satisfied, then
$$
\EE\,(b_t)\le  \EE\,  \eta +  \frac{\EE\,\eta^2}{2\EE\,\zeta}.\eqno\TR
$$
\end{corollary}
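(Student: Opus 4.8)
The corollary is exactly the case $k=2$ of Theorem~\ref{thm2}: substituting $k=2$ into (\ref{osn1}) collapses the left-hand side to $\EE\,(b_t)^{1}=\EE\,b_t$ and the right-hand side to $\EE\,\eta+\dd\frac{\EE\,\eta^{2}}{2\,\EE\,\zeta}$, which is precisely the asserted bound. So the only real task is to establish the general inequality (\ref{osn1}); the plan below does that, and the corollary drops out by setting $k=2$.

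First I would translate conditions 1--3 into the two one-sided comparisons the estimate actually needs. Through the definition of $\Phi$ and $\Psi$ via the bracketing intensities $\varphi\le h_i\le Q$, the (dependent, non-identically distributed) inter-arrival laws $F_j$ are stochastically sandwiched, so that each tail is dominated by that of $\eta$, i.e. $1-F_j(x)\le 1-\Phi(x)$, while the means satisfy $\EE\,\xi_j\ge\EE\,\zeta$. These are the only facts about the $F_j$ that enter: an upper tail bound supplied by $\eta$ and a lower mean bound supplied by $\zeta$.

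Next I would use the renewal representation of the age $b_t=t-S_{N_t}$. Decomposing on the last arrival before $t$ gives, for $0\le x\le t$,
$$\PP\{b_t>x\}=\big(1-F_1(t)\big)+\intl_0^{\,t-x}\big(1-F(t-s)\big)\,\ud H(s),$$
where $H(s)=\EE\,N_s$ is the renewal (counting) measure and the first summand is the contribution of the event that no arrival has yet occurred. From $\EE\,(b_t)^{k-1}=(k-1)\intl_0^{t}x^{k-2}\,\PP\{b_t>x\}\,\ud x$ the two summands separate. For the first, $(k-1)\intl_0^{t}x^{k-2}\big(1-F_1(t)\big)\ud x=t^{k-1}\big(1-F_1(t)\big)\le t^{k-1}\big(1-\Phi(t)\big)\le\EE\big[\eta^{k-1}\1\{\eta>t\}\big]\le\EE\,\eta^{k-1}$, the middle step being the elementary tail inequality $t^{k-1}\PP\{\eta>t\}\le\EE[\eta^{k-1}\1\{\eta>t\}]$. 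For the second summand, Fubini in $x$ (for fixed $s$ the variable $x$ runs over $[0,t-s]$) together with $1-F\le 1-\Phi$ turns it into $\intl_0^{t}\big(1-\Phi(t-s)\big)(t-s)^{k-1}\,\ud H(s)$; bounding the renewal measure by $\ud H(s)\le \ud s/\EE\,\zeta$ and substituting $v=t-s$ yields $\dd\frac{1}{\EE\,\zeta}\intl_0^{\infty}v^{k-1}\big(1-\Phi(v)\big)\ud v=\frac{\EE\,\eta^{k}}{k\,\EE\,\zeta}$, the last equality being integration by parts. Adding the two estimates gives (\ref{osn1}).

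The step I expect to be the genuine obstacle is the renewal-measure bound $\ud H(s)\le \ud s/\EE\,\zeta$, which is false pointwise for a general inter-arrival law and which here must moreover be justified for a sequence of \emph{dependent and non-identically distributed} $\xi_j$. This is exactly where the earlier machinery is meant to enter: the comparison Lemma and the ordering $\prec$ let one dominate the counting measure of the true process by that of the i.i.d.\ ``standard'' $M|G|\infty$ system built from $\Psi$ (intensity $Q$), whose arrivals are the densest admissible, and the Coupling Lemma supplies the coupling on a common probability space that makes this domination pathwise. A secondary technical point is that conditions 1--2 admit generalized intensities carrying atoms (the $\delta$-function terms), so every ``density'' manipulation above should be read in the Stieltjes sense, and one must check that the atoms do not spoil the tail and mean comparisons; this is routine but should be stated explicitly.
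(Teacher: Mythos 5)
Your proposal is correct and matches the paper's (implicit) argument: the corollary is obtained simply by setting $k=2$ in inequality (\ref{osn1}) of Theorem~\ref{thm2}, which is all the paper does. The additional sketch of how to prove Theorem~\ref{thm2} itself goes beyond what the paper contains --- the paper takes that theorem as given, citing \cite{kalzv}, \cite{Chang} and \cite{Lorden} rather than proving it in the text --- so it is not needed for the corollary, though your identification of the renewal-measure bound as the delicate step is a reasonable reading of where the cited proofs do the real work.
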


If one of two processes $X_t$, $X_t'$ hits to the set $\mathcal{S}_0\bd \{(0,\alpha)\}$, then with probability greater then $\pi_0=\rho$ (see (\ref{rho})) the second process is in idle state.

The elapsed time of the idle period of the second process has an expectation $\EE\,b\le \EE\, \eta + \dd\frac{\EE\,\eta^2}{2\EE\,\zeta}\bd \Theta_0$ (Generalized Lorden's inequality).

Therefore, for any $\Theta>\Theta_0$, $\PP\{b<\Theta\}\ge  1-\dd\frac {\Theta_0}{\Theta}$ by Markov inequality.

Both intensities $\varphi>0$ for $X_t=\left(0,x_t^{(0)}\right)$, $X_t'=\left(0,{x_t'}^{(0)}\right)$, therefore 
$$\kappa\bd \infl_{\alpha\le b}\intl_0^\infty \min\{\lambda_0,(\alpha+s),\lambda(0,s)\}\ud s=\pi_1.$$

Hence, with probability greater then $\pi_0\pi_1$ we can apply basic coupling lemma, and after coincidence of both processes, their distributions will be equal.

Thus, after any hit of the process $X_t'$ to the set $\SSS_1$, we have the coupling epoch.

If this event happens, the processes will stick together.

So, at the end of any regeneration period, the processes $X_t$ and $X_t'$ can coincides with probability $\pi\bd\pi_0\pi_1$ (this probability can be improved, for example, by the choice of $\Theta$).
Therefore the coupling epoch $\tau(X_0')$ is a geometrical sum of the regeneration periods of $X_t'$ including the first incomplete one.

Thus, we can find the upper bounds for $\EE\,[\tau(X_0')]^k$ in the case when $\EE\,\xi_i^k<\infty$ ($C>k-1$):
\vspace{-5mm}
\begin{eqnarray*}
\!\!\!\!\!\!\!\! \EE\,[\tau(X_0')]^k\le \pi\suml_{i=0}^\infty (1-\pi)^i\EE\,\left( R_0+\suml_{j=1}^ib_j\right)^k \le\hspace{4cm}
\\ \\
\le \suml_{i=0}^\infty (1-\pi)^i (i+1)^{k-1}\EE\,\left(R_0^k+\suml_{j=1}^iR_j^k \right)= \hspace{4cm}
\\ \\
=\EE\,R_0^k\times \suml_{i=0}^\infty (1-\pi)^i( i+1)^{k-1} + \EE\,R_1^k\times \suml_{i=0}^\infty (1-\pi)^i (i+1)^{k}=\qquad
\\ \\
=K_0(k,\pi)\EE\,R_0^k+K(k,\pi)\EE\,R_1^k,
\end{eqnarray*}
where $R_0$ is the first regeneration point of $X_t'$, and $R_i$, $i\in \mathbf      N$ are the length of subsequent regeneration periods.

\subsection      {$TV$-distance}
So, if $\PPP_t$ is distribution of $X_t$, and $\PPP_t'$ is distribution of $X_t'$ then
$$
\|\PPP_t-\PPP_t'\|_{TV}\le 2\dd \frac{K_0(k,\pi)\EE\,R_0^k+K(k,\pi)\EE\,R_1^k}{t^k}.
$$

As $\PPP_t'\Longrightarrow \PPP$ for all $X_0'$, then
\begin{eqnarray*}
\|\PPP_t-\PPP\|_{TV}\le 2 \frac{\dd\intl_{\XX}(K_0(k,\pi)\EE\,R_0^k+K(k,\pi)\EE\,R_1^k)\PPP(\ud X_0')}{t^k}=\qquad \qquad \qquad 
\\ \\
=2\; \frac{\dd K_0(k,\pi){\intl_{\XX}\EE\,R_0^k\PPP(\ud X_0')}+K(k,\pi)\EE\,R_1^k}{t^k}.
\end{eqnarray*}
\begin{center}
{\bf But we don't know neither the stationary distribution of $X_t$ nor the stationary distribution for the ``standard'' model!!!}
\end{center}

\subsection{Estimation of stationary distribution}
The lower bounds for the distribution $\mathcal P$ can be obtained by standard methods from a renewal theory (see \cite{Smith}).

$$
\begin{array}{l}
\mathcal P\{n_t=0\}\in[0; 1];
\\ \\
\mathcal P\left\{n_t=m>0, x^{(0)}\le a_1, x^{(1)}\le a_1, x^{(2)}\le a_1, \ldots x^{(m)}\le a_1, \right\}\ge 
\\ \\
\le \dd e^{-\rho}\frac{\rho^k}{k!} \times\frac{\intl_{a_1}^\infty 1-\Psi(u)\ud u}{\intl_0^\infty x^2\ud \Phi(x)}\times \frac{\intl_{a_2}^\infty 1-\Psi(u)\ud u}{\intl_0^\infty x^2\ud \Phi(x)} \times \ldots \frac{\intl_{a_m}^\infty 1-\Psi(u)\ud u} {\intl_0^\infty x^2\ud \Phi(x)}, 
\end{array}
$$
\\
where 
$\Psi(x)=1-\dd\intl_0^x\exp\left(\intl_0^s -Q(s)\ud s\right)\ud x$.

Now, by integration $\dd\intl_{\mathcal X}\EE\,R_0^k\PPP(\ud X_0')$ we obtain the constant $\mathbf K$.

\subsection{Other method to obtain the bounds $\mathbf K$}
Also we can consider an embedded renewal process with the renewal times equal to moments of process returns to the zero-state  $(0,0)$: $\{ t: X_t=(0,0) \}$.
We can find lower bounds for intensity of this flow from formulae (\ref{tak2})--(\ref{razl}) i, and then apply an approach proposed in \cite{zv1,zv2}.

\section*{Acknowledgments}
The author  is grateful to E.~Yu.~Kalimulina for the great help in preparing this paper. The work is supported by RFBR, project No~20-01-00575 A.

\end{document}